\documentclass[12pt,a4paper]{amsart}

\usepackage{graphicx}

\usepackage{amsmath,amssymb}                      

\usepackage[margin=1.4in]{geometry}

\usepackage{amsthm}

\newtheorem{proposition}{Proposition}

\title[Modelling covariances with variograms]{How to model the covariance structure in a spatial framework: variogram or correlation function?}

\author[Pistone and Vicario]{
Giovanni Pistone
\and
Grazia Vicario
}

\address{G.Pistone: de Castro Statistics, Collegio Carlo Alberto, Via Real Collegio 30, 10024 Moncalieri, Italy (E-mail: {\tt giovanni.pistone@carloalberto.org}). G. Vicario: DISMA {\it Luigi Lagrange}, Politecnico di Torino, Corso Duca degli Abruzzi 24, 10124 Torino, Italy (E-mail: {\tt grazia.vicario@polito.it})
}

\usepackage{etex}
\usepackage{bm}
\usepackage{localmacros}

\begin{document}
\thispagestyle{empty}
\maketitle             

\begin{abstract} The basic Kriging's model assumes a Gaussian distribution with stationary mean and stationary variance. In such a setting, the joint distribution of the spatial process is characterized by the common variance and the correlation matrix or, equivalently, by the common variance and the variogram matrix. We discuss in in detail the option to actually use the variogram as a parameterization.
\end{abstract}
\keywords{Kriging, Universal Kriging, Non-parametric Kriging, Elliptope, Bayes}

\section{Introduction}
The present note is a development of the Author's paper \cite{pistone|vicario:2016sis2014}. An application was implemented in Vicario \emph{et al.} paper \cite{vicario|pistone|craparotta:2016}. In the interest of clarity we allow here for a little overlap with the papers referred to above.

We discuss the notion of \emph{variogram} as it is used in Geostatistics and we offer some preliminary thought about the possibility of a non-parametric approach to Universal Kriging aiming to the use of a Bayes methodology. Variograms  are due to Matheron \cite{matheron:1962}, and there are many modern expositions i.e., Cressie \cite[Ch. 2]{cressie:1993}, Chiles and Delfiner \cite[Ch. 2]{chiles|delfiner:1999}, Gneiting \emph{et al.} \cite{gneiting|sasvari|schlauder:2001}, Gaetan and Guyon \cite[Ch.1]{gaetan|guyon:2010}.

In Sec. \ref{sec:UK} we give a brief overview of the so called Universal Kriging model and its parameterization with the Matheron's variogram function. In Sec. \ref{sec:V} we define the general variogram matrix and give a necessary and sufficient condition for a positive variance $\sigma^2$ and a matrix $\Gamma$ to be a variogram matrix of a covariance $\sigma^2R$, where $R$ is a correlation matrix. In Sec. \ref{sec:inv} we provide some useful computations concerning the inverse variogram matrix. Sec \ref{sec:projection} is devoted to an interpretation of the variogram matrix as related to a projection of the Gaussian field. In Sec. \ref{sec:elliptope} we discuss the shape of the set of parameters of the general Kriging model. A section of conclusions closes the paper.

\section{Universal Krige setup} \label{sec:UK}

 We consider a Gaussian $n$-vector $Y$, $n\ge 2$, whose mean has the form 
  $\bmu = \mu \one$ and whose covariance matrix
  $\Sigma = [\sigma_{ij}]_{ij=1}^n$ has constant diagonal
  $\sigma_{ii} = \sigma^2$, $i=1,\dots,n$. The assumption on the mean and the diagonal terms is the weakest \emph{stationarity assumption}, i.e. a 1-st order stationarity. We can write $Y \sim \Normal_n(\mu\one,\sigma^2R)$, where $\mu$ is a general mean value, $\sigma^2$ the common variance,  and $R = [\rho_{ij}]_{i,j=1}^n$ a generic correlation matrix.

  The \emph{variogram} of $Y$ is the $n \times n$ matrix
  $\Gamma = [\gamma_{ij}]_{i,j=1}^n$ whose element $\gamma_{ij}$ is half the variance of the difference $Y_i - Y_j$. As the mean value is constant, the variance of the difference is equal to the second moment of the difference. It is expressed in terms of the common variance $\sigma^2$ and the correlations $\rho_{ij}$, $i,j = 1,\dots,n$, as
  \begin{multline*}
    2\gamma_{ij} = \varof{Y_i-Y_j} = \sigma^2(\sbasis_i - \sbasis_j)' R (\sbasis_i - \sbasis_j) = \\ \sigma^2\left(\rho_{ii} + \rho_{jj} - 2 \rho_{ij}\right) = 2 \sigma^2 (1-\rho_{ij}) \ ,
  \end{multline*}
and, in matrix form, as
  \begin{equation*}
    \Gamma = \sigma^2(\one \one' - R).
  \end{equation*}

The simple Gaussian model described above is commonly used in Geostatistics, when each random component $Y_i$ of the random vector $Y$ is associated to a location $x_i$, $i = 1,\dots,n$, in a given region $X$, $x_i \in X$, $i = 1,\dots,n$.

 We briefly describe the most common set-up in Geostatistics. The elements of the variogram matrix $\gamma_{ij}$ are assumed to be a given function $\gamma$ of the distance between two locations, $\gamma_{i,j} = \gamma(d(x_i,x_j))$. In such a case the statistical model is characterized by the choice of a distance $d(x,y)$, $x,y \in X$, and by the choice of a function $\gamma$, called \emph{variogram function}, defined on a real domain containing $\setof{d(x,y)}{x,y \in X}$ . The existence of a positive $\sigma^2$ and a correlation matrix $R = [\rho_{ij}]$ such that $\sigma^2(1-\rho_{ij}) = \gamma(d(x_i,x_j))$ imposes a nontrivial condition on the function $\gamma$, see Sasv{\'a}ri \cite{sasvari:1994book} and Gneiting \emph{et al.} \cite{gneiting|sasvari|schlauder:2001} and \cite{}. Such a model, where it is assumed that the vector of means is constant, $\bmu = \mu \one$ is called \emph{universal Krige} model. We do not consider the more general case of a non-constant mean. 

Krige has further qualified this model by adding assumptions on the variance function and suggesting a statistical method to estimate the value at an untried point $x_0$ given a set of observation $Y_1,\dots,Y_n$ at points $x_1,\dots,x_n$. Precisely:

\begin{enumerate}
  \item Krige's modeling idea is to assume the variogram function $\gamma$ to be an \emph{increasing} function on $[0,\infty[$, so that the variogram's values are increasing with the distance. Moreover, the correlation between locations is assumed to be positive. The rational is to model a variability which increases with the distance and is bounded by a general variance:
  \begin{equation*}
   0 \le \frac12\varof{Y_i-Y_j} = \gamma_{ij} = \gamma(d(x_i,x_j)) = \sigma^2(1-\rho_{ij}) \le \sigma^2 \ . 
  \end{equation*}

The increasing function $\gamma \colon \reals_+ \to \reals_+$ is assumed to be continuous everywhere but at 0. As it is bounded at $+\infty$, the general shape is as in Fig. \ref{fig:variogram}.
\begin{figure}[t]
  \centerline{
    \includegraphics[width=0.7\textwidth]{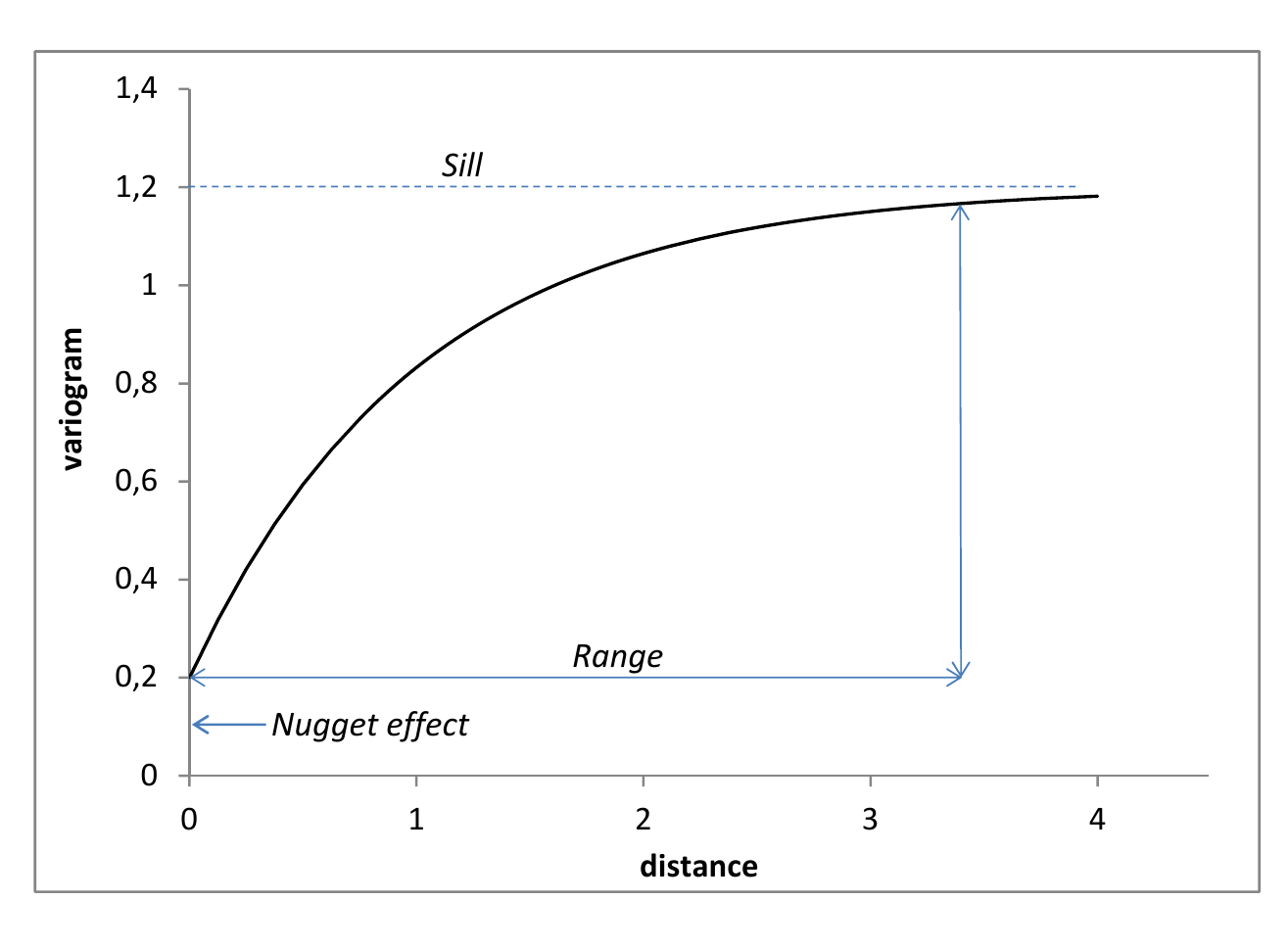}}
  \caption{A general variogram function is 0 at 0, can have jump at 0 which is called \emph{Nugget}, has a finite limit at $+\infty$ named \emph{Sill}. The \emph{Range} is a length such that the value is equal to the limit value for any practical purpose.}
  \label{fig:variogram}
\end{figure}

The parameters in the Krige's universal model are unrestricted values of $\mu \in \reals, \sigma^2 > 0$, and restricted values of $R$ that are usually estimated over a suitable parametric model. 

\item Krige's idea is to predict the value $Y_0=Y_{x_0}$ at an untried location $x_0$ with the conditional expectation based on a plug-in estimate of the parameters. If $I = \set{1,\dots,n}$ and the locations in the model are $x_0,x_1,\dots,x_n$, the regression value is 
  \begin{equation*}
  \widehat{Y_0 - \mu} = \Sigma_{0,I} \Sigma_{I,I}^{-1} (Y_I - \mu \one_I)
  , \quad \text{with} \quad  \Sigma = \begin{bmatrix} \Sigma_{I,I} & \Sigma_{I,0} \\ \Sigma_{0,I} & \sigma^2 \ .
    \end{bmatrix}
  \end{equation*}
  The set of data that give the same prediction is an affine
  plane in $\reals^n$. The variance of the prediction is $\sigma^2_0 - \Sigma_{0,I} \Sigma_{I,I}^{-1}\Sigma_{I,0}$. 
\end{enumerate}

In this paper we do not follow this approach, but we adopt a general non-parametric attitude, where $\mu$ is real number, $\sigma^2$ is a positive real number, $R$ is a positive definite matrix with unit diagonal, possibly with positive entries. The variogram matrix $\Gamma$ is not restricted and we do not enforce the existence of any special form.

\section{The variogram matrix} \label{sec:V}

Our plan now is to express the Krige's computations in term of Matheron's variogram matrix $\Gamma$. One good reason to use $\Gamma$ as a basic parameter is because its empirical estimator is unbiased.

Let us discuss in some detail the basic transformation of matrix parameters
\begin{equation}\label{eq:transform}
  \Gamma = \sigma^2(\one\one' - R) = \sigma^2\one\one' - \Sigma \ .
\end{equation}

We note that $\Gamma=0$ if, and only if, $R = \one\one'$, such extreme case being always excluded in the following. In fact, in most cases we will assume $\det R \ne 0$.

The entries of $\Gamma$ are nonnegative and bounded by $2\sigma^2$ because the correlations are bounded between $-1$ and $1$. If all the correlations are nonnegative, the the entries of $\Gamma$ are bounded by $\sigma^2$.

The difference between the covariance matrix and the variogram matrix is a matrix of rank 1, $\Gamma + \Sigma =  \sigma^2 \one\one'$. Let us remark moreover that 
\begin{equation} \label{eq:negdef}
\frac1n \one\one' = \frac1{n\sigma^2} (\Gamma + \Sigma)
\end{equation}
is the orthogonal projector on the space of constant vectors $\spanof{\one}$.

We denote by $\mathbb S_=$ the cone of nonnegative definite matrices with constant diagonal, by $\mathbb S_{=1}$ the convex set of correlation matrices, and by $\mathbb V$ the cone of variogram matrices. We have the following characterization of $\mathbb V$.

\begin{proposition} \label{prop:negdef}
A nonzero matrix $\Gamma$ is the variogram matrix of some covariance matrix of the form $\Sigma = \sigma^2 R$, with $\sigma^2 > 0$ and $R$ a correlation matrix, if, and only if, the three following conditions hold
  \begin{enumerate}
  \item \label{item:negdef1} $\Gamma$ is symmetric, and has zero diagonal; 
  \item \label{item:negdef2} $\Gamma$ is \emph{conditionally negative definite}, i.e. $\bw \Gamma \bw \le 0$ if $\bw'\one=0$; 
  \item \label{item:negdef3} $\sup \setof{\bx' \Gamma \bx}{\bx'\one = 1} \le \sigma^2$.
   \end{enumerate}
\end{proposition}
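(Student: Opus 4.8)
The plan is to work entirely through the affine identity \eqref{eq:transform}, $\Sigma = \sigma^2\one\one' - \Gamma$, which for each fixed $\sigma^2 > 0$ is a bijection between symmetric matrices $\Gamma$ with zero diagonal and symmetric matrices $R = \sigma^{-2}\Sigma$ with unit diagonal. Since a correlation matrix is, by definition, a symmetric positive semidefinite matrix with unit diagonal, the proposition reduces to matching the symmetry/zero-diagonal bookkeeping with item~\ref{item:negdef1}, and the positive semidefiniteness of $R$ (equivalently of $\sigma^2\one\one' - \Gamma$) with the pair \ref{item:negdef2}--\ref{item:negdef3}.

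For necessity I would just substitute $\Gamma = \sigma^2(\one\one' - R)$ and read off each property: symmetry and $\gamma_{ii} = \sigma^2(1-\rho_{ii}) = 0$ are immediate; for $\bw$ with $\bw'\one = 0$ the rank-one term drops out and $\bw'\Gamma\bw = -\sigma^2\bw'R\bw \le 0$ since $R$ is positive semidefinite; for $\bx$ with $\bx'\one = 1$ one gets $\bx'\Gamma\bx = \sigma^2(1 - \bx'R\bx) \le \sigma^2$, so the supremum in item~\ref{item:negdef3} is bounded by $\sigma^2$. This direction is essentially a one-line computation once \eqref{eq:transform} is in hand.

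The content is in the converse. Given $\Gamma$ satisfying the three conditions, I would set $\Sigma := \sigma^2\one\one' - \Gamma$ and $R := \sigma^{-2}\Sigma$; symmetry and unit diagonal follow at once from item~\ref{item:negdef1}, so everything comes down to $\Sigma \succeq 0$, i.e. $\bx'\Gamma\bx \le \sigma^2(\bx'\one)^2$ for every $\bx \in \reals^n$. The idea is to split on the value of $\bx'\one$: if $\bx'\one = 0$ the inequality is exactly item~\ref{item:negdef2}; if $\bx'\one = c \ne 0$ one rescales to $c^{-1}\bx$, which lies in the hyperplane $\setof{\by}{\by'\one = 1}$, and applies item~\ref{item:negdef3}, giving $\bx'\Gamma\bx = c^2\,(c^{-1}\bx)'\Gamma(c^{-1}\bx) \le \sigma^2 c^2$. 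Both sides are homogeneous of degree two in $\bx$, which is what makes this dichotomy exhaustive.

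The only place needing any care is this positive-semidefiniteness step, and the care is conceptual rather than technical: the test set $\setof{\bx}{\bx'\one = 1}$ is non-compact, so the supremum in item~\ref{item:negdef3} may be $+\infty$, which is consistent, since that is precisely the case in which no finite $\sigma^2$ turns $\Gamma$ into a variogram matrix. The underlying reason the argument works is that the rank-one perturbation $\sigma^2\one\one'$ acts only on the line $\spanof{\one}$, so the inequality $\bx'\Sigma\bx \ge 0$ need only be checked on the two complementary pieces $\bx'\one = 0$ and $\bx'\one = 1$; items \ref{item:negdef2} and \ref{item:negdef3} together are just a repackaging of $\sigma^2\one\one' - \Gamma \succeq 0$ along the decomposition $\reals^n = \spanof{\one} \oplus \one^{\perp}$. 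I do not anticipate an obstacle beyond stating this decomposition cleanly.
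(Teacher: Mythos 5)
Your proposal is correct and follows essentially the same route as the paper: both directions rest on the affine identity $\Sigma = \sigma^2\one\one' - \Gamma$ and on verifying positive semidefiniteness of $\one\one' - \sigma^{-2}\Gamma$ by testing the quadratic form along $\spanof{\one}$ and its orthogonal complement. If anything, your explicit dichotomy between $\bx'\one = 0$ and $\bx'\one \ne 0$ in the converse is slightly more careful than the paper's displayed computation, which divides by $\bx'\one$ and so tacitly skips the case $\bx'\one = 0$ (the case covered by condition~(2)).
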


\begin{proof}
Assume $\Gamma=\sigma^2(\one\one' - R) \ne 0$ with $R$ correlation matrix and $\sigma^2 > 0$. Condition \ref{item:negdef1} follows from the definition. If we write a generic vector as $\bx = \bw + \alpha \one$ with $\bw'\one = 0$, we have
\begin{equation*}
 n^2 \sigma^2 \alpha^2 = \bx' \Gamma \bx + \bx' \Sigma \bx \ .
\end{equation*}
In particular, Condition \ref{item:negdef2} follows because $\alpha=0$ implies $\bx' \Gamma \bx = -  \bx' \Sigma \bx$. Finally, if $\bx'\one = 1$, that is $n \alpha=1$, we have $\bx'\Gamma\bx - \sigma^2 = \bx' \Sigma \bx \ge 0$ and Condition \ref{item:negdef3} follows.

Viceversa, consider the matrix $\one\one' - \sigma^{-2} \Gamma$. It is symmetric, with unit diagonal. We need only to show it is positive definite:
  \begin{multline*}
    \bx' (\one\one' - \sigma^{-2} \Gamma) \bx = (\bx'\one)^2 - \sigma^{-2} \bx' \Gamma \bx  = \\ (\bx'\one)^2(1 - \sigma^{-2} \left(\frac1{\bx'\one}\bx\right)' \Gamma \left(\frac1{\bx'\one}\bx\right) \ge 0
  \end{multline*}
because of Condition \ref{item:negdef3}.
\end{proof}

The lower bound imposed on $\sigma^2$ means that the parameterization with $\sigma^2$, carrying one degree of freedom, and with $\Gamma$, carrying $n(n-1)/2$ degrees of freedom, has a drawback in that the two parameters are not independently defined on a product set. Note the relation $\one'\Gamma\one = \sigma^2(n^2- \one' R \one)$ that we are going to discuss in Sec. \ref{sec:inv} below. 

In conclusion, there is a one-to-one transformation of parameters
\begin{equation*}
(\sigma^2,\Gamma) \leftrightarrow (\sigma^2, R) \leftrightarrow \Sigma
\end{equation*}
with $\sigma^2 \in \reals_>$, $\Gamma \in \mathbb V$, $R \in \mathbb S_{=1}$, $\Sigma \in \mathbb S_=$, namely:
\begin{enumerate}
\item
The mapping from $\Sigma \in \mathbb S_=$ to the couple $(\sigma^2,\Gamma) \in \reals_> \times \mathbb V$ factors as
\begin{equation*}
  \mathbb S_{=} \ni \Sigma \mapsto (\frac1n \traceof \Sigma,\left(\frac1n \traceof \Sigma\right)^{-2} \Sigma) = (\sigma^2,R) \in ]0,\infty[ \times \mathcal R 
\end{equation*}
and
\begin{multline*}
]0,\infty[ \times \mathcal R \ni (\sigma^2,R) \mapsto (\sigma^2,\sigma^2(\one\one' - R)) = \\  (\sigma^2,\Gamma) \in \setof{(\sigma^2,\Gamma)}{\Gamma\in\mathbb V,\sup \setof{\bx' \Gamma \bx}{\bx'\one = 1} \le \sigma^2}.   
\end{multline*}
\item Inverse is
\begin{multline*}
\setof{(\sigma^2,\Gamma)}{\Gamma\in\mathbb V,\sup \setof{\bx' \Gamma \bx}{\bx'\one = 1} \le \sigma^2} \ni (\sigma^2,\Gamma) \mapsto \\ \sigma^2 \one\one' - \Gamma = \Sigma \in \mathbb S_{=}
\end{multline*}
\end{enumerate}

\section{Inverse variogram matrix $\Gamma^{-1}$}
\label{sec:inv} 

The equations two above, both based on the definition of the variogram matrix as $\Gamma = \sigma^2(\one\one' - R)$, provide a simple connection between the parameterization based on the covariance matrix $\Sigma$ and the parameterization based on the couple $\sigma^2$ and $\Gamma$. However, we want to spell out the computation of an other key statistical parameter, namely the concentration matrix $\Sigma^{-1}$. We begin by recalling a well known equation in matrix algebra \cite{numericalrecipies:1996}. We review the result in detail as we need an exact statement of the conditions under which is true in our case.

\begin{proposition}[Sherman-Morrison formula] Assume the matrix $A$ is invertible. The matrix $\one\one' - A$ is invertible if, and only if, $\one'A^{-1}\one \ne 1$. In such a case, 
  \begin{equation*}
    \det (\one\one' - A) = (-1)^n  (1 - \one' A^{-1} \one) \det A \ ,
  \end{equation*}
\begin{equation*}
      (\one\one' - A)^{-1} = -A^{-1}-(1-\one'A^{-1}\one)^{-1}A^{-1}\one\one'A^{-1}.
    \end{equation*}
  \end{proposition}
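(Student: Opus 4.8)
The plan is to exploit the rank-one structure $\one\one' - A = -(A - \one\one')$ and to handle the three assertions — the invertibility criterion, the inverse formula, and the determinant formula — in turn, using only elementary linear algebra. For the ``only if'' part of the criterion I would argue directly: if $\one'A^{-1}\one = 1$, then the vector $A^{-1}\one$ is nonzero (since $A$ is invertible and $\one \neq 0$) and lies in the kernel of $\one\one' - A$, because $(\one\one' - A)A^{-1}\one = \one(\one'A^{-1}\one) - \one = \one - \one = 0$; hence $\one\one' - A$ is singular. The ``if'' part will then come for free from the explicit inverse.

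So suppose $\one'A^{-1}\one \neq 1$, put $c := (1 - \one'A^{-1}\one)^{-1}$ and $B := -A^{-1} - c\,A^{-1}\one\one'A^{-1}$, and verify $(\one\one' - A)B = I$ by expansion. The terms carrying the factor $A$ contribute $I + c\,\one\one'A^{-1}$, the terms carrying $\one\one'$ contribute $-\one\one'A^{-1} - c(\one'A^{-1}\one)\,\one\one'A^{-1}$, and the coefficient of $\one\one'A^{-1}$ in the total is $c\bigl(1 - \one'A^{-1}\one\bigr) - 1 = c\cdot c^{-1} - 1 = 0$. Since a square matrix possessing a right inverse is invertible with that matrix as its unique inverse, this simultaneously establishes the ``if'' direction and the stated formula for $(\one\one' - A)^{-1}$.

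For the determinant I would compute $\det\begin{bmatrix} 1 & \one' \\ \one & A \end{bmatrix}$ in two ways via Schur complements: eliminating the scalar $(1,1)$ block gives $\det(A - \one\one')$, while eliminating the block $A$ (legitimate since $A$ is invertible) gives $(1 - \one'A^{-1}\one)\det A$; equating these and multiplying by $(-1)^n = \det(-I_n)$ to pass from $A - \one\one'$ to $\one\one' - A$ yields the claimed identity. Alternatively one may simply invoke the matrix determinant lemma of \cite{numericalrecipies:1996}. I do not expect a genuine obstacle here — the content is a routine verification — and the only points needing a little care are the logical organisation of the biconditional (forward direction by the kernel argument, backward direction absorbed into the construction of $B$) and the bookkeeping of the scalar $\one'A^{-1}\one$ while collecting the rank-one terms in the product $(\one\one' - A)B$.
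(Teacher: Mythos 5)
Your proof is correct, but it is organised differently from the paper's. The paper proves the determinant identity first, by expanding $\det(\one\one'-A)$ multilinearly in terms of the cofactors of $-A$ and recognising $\sum_{i,j}A^{ij}=\one'(\Adj A)\one=(\det A)\,\one'A^{-1}\one$; the equivalence ``$\one\one'-A$ invertible $\Leftrightarrow$ $\one'A^{-1}\one\ne 1$'' is then read off from the non-vanishing of that determinant, and the inverse formula is only ``directly checked'' (i.e.\ the same multiplication you carry out in detail). You instead decouple the invertibility statement from the determinant: the ``only if'' direction comes from exhibiting the explicit kernel vector $A^{-1}\one$ when $\one'A^{-1}\one=1$, and the ``if'' direction is absorbed into the verification that $B=-A^{-1}-(1-\one'A^{-1}\one)^{-1}A^{-1}\one\one'A^{-1}$ is a right (hence two-sided) inverse; your determinant computation goes through the bordered matrix $\begin{bmatrix}1&\one'\\ \one&A\end{bmatrix}$ and its two Schur complements, plus the factor $(-1)^n$ from $\one\one'-A=-(A-\one\one')$. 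Both routes are elementary and complete; yours avoids the adjugate/cofactor bookkeeping (where the paper's displayed signs are delicate) and the Schur-complement trick generalises verbatim to $\det(A+\bm u\bm v')$, while the paper's multilinear expansion is self-contained, needs no bordered matrix, and delivers the determinant formula as the single source from which both the invertibility criterion and (after checking) the inverse follow.
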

  
\begin{proof} The multi-linear expansion of $\det (\one\one' - A)$ is written in terms of the adjoints $(-A)^{ij}$ of each element $(-A)_{ij}$ by  

\begin{align*}
  \det (\one\one' - A) &= \detof{-A} + \sum_{j=1}^n \sum_{i=1}^n  (-A)^{ij}
\\ &= (-1)^n \Det A - (-1)^{n-1} \sum_{i,j=1}^n A^{ij} \\ &= (-1)^n \Det A - (-1)^{n-1} \one (\Adj A) \one' \ .
\end{align*}

As $\det A \ne 0$, we can factor-out $(-1)^n (\det A)$ to get

\begin{equation*}
  \det (\one\one' - A) = (-1)^n (\det A) (1 - \one' A^{-1} \one) \ 
\end{equation*}
and the statement about the determinant follows. The inversion formula is directly checked.
\end{proof}

We are concerned with the invertibility of $\Gamma = \sigma^2(\one\one' - R)$, hence we need to discuss the condition $\one' R^{-1} \one \ne 1$.

\begin{proposition}
\label{prop:3}
Let $R$ be a correlation matrix and assume $\det R \ne 0$. Let $\lambda_j > 0$, $j=1,\dots,n$, be the spectrum of $R$ and let $\bu_j$ be a set of unit eigenvectors. It can be proved: 
\begin{enumerate}
\item $\Tr R = \sum_{j=1}^n \lambda_j = n$ and $\det R = \prod_{j=1}^n \lambda_j \le 1$, with equality if, and only if, $R = I$.
\item $\Tr R^{-1} = \sum_{j=1}^n \lambda_j^{-1} \ge n$ with equality if, and only if, $R=I$.
\item $\one' R^{-1} \one \ne 1$.
\end{enumerate}
\end{proposition}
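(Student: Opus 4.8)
The plan is to prove the three claims in sequence, using only the spectral identities $\Tr R = \sum_j \lambda_j$, $\det R = \prod_j \lambda_j$, $\Tr R^{-1} = \sum_j \lambda_j^{-1}$ together with a couple of elementary inequalities. For claim (1): a correlation matrix has all diagonal entries equal to $1$, so $\Tr R = \sum_{j=1}^n \lambda_j = n$; the arithmetic--geometric mean inequality applied to the positive numbers $\lambda_1,\dots,\lambda_n$ then gives $\det R = \prod_j \lambda_j \le \bigl(\tfrac1n\sum_j\lambda_j\bigr)^n = 1$, and equality in AM--GM forces $\lambda_1=\dots=\lambda_n$, hence each $\lambda_j=1$ (they sum to $n$), hence $R=I$ since a symmetric matrix whose eigenvalues are all $1$ is the identity. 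For claim (2): $R^{-1}$ exists (as $\det R\ne0$) and its spectrum is $\lambda_1^{-1},\dots,\lambda_n^{-1}$, so $\Tr R^{-1}=\sum_j\lambda_j^{-1}$; Jensen's inequality for the convex function $t\mapsto 1/t$ on $]0,\infty[$ (equivalently the arithmetic--harmonic mean inequality) gives $\tfrac1n\sum_j\lambda_j^{-1}\ge\bigl(\tfrac1n\sum_j\lambda_j\bigr)^{-1}=1$, again with equality iff all $\lambda_j$ coincide, i.e. iff $R=I$.

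For claim (3) --- the one actually needed to apply the Sherman--Morrison formula to $\Gamma=\sigma^2(\one\one'-R)$ --- the plan is to apply the Cauchy--Schwarz inequality to $R^{1/2}\one$ and $R^{-1/2}\one$ (both well defined because $R$ is positive definite):
\begin{equation*}
  n^2 = (\one'\one)^2 = \bigl(\one'R^{1/2}R^{-1/2}\one\bigr)^2 \le \bigl(\one'R\one\bigr)\bigl(\one'R^{-1}\one\bigr),
\end{equation*}
and to combine this with the trivial estimate $\one'R\one=\sum_{i,j}\rho_{ij}\le n^2$ (each $\rho_{ij}\le1$). If $\one'R^{-1}\one$ were $1$, these two facts would force $\one'R\one=n^2$, hence $\rho_{ij}=1$ for all $i,j$, i.e. $R=\one\one'$, which is singular and contradicts $\det R\ne0$. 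Hence $\one'R^{-1}\one\ne1$; in fact the same two inequalities show $\one'R^{-1}\one\ge1$ always, with strict inequality whenever $R$ is invertible.

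I expect claims (1) and (2) to be routine and claim (3) to be the only real obstacle, the decisive point there being to pair Cauchy--Schwarz with the a priori bound $\one'R\one\le n^2$ rather than to argue directly from the eigen-expansion $\one'R^{-1}\one=\sum_j\lambda_j^{-1}(\bu_j'\one)^2$, which does not by itself rule out the value $1$. A fall-back argument for claim (3), if needed, is the variational identity $\bigl(\one'R^{-1}\one\bigr)^{-1}=\min\setof{\bw'R\bw}{\bw'\one=1}$ tested on a standard basis vector $\bw$, for which $\bw'R\bw=1$; this again yields $\one'R^{-1}\one\ge1$ and the equality case can be analysed the same way, but the Cauchy--Schwarz route is shorter.
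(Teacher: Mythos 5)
Your proof is correct, and parts (1) and (2) follow essentially the paper's own route (AM--GM for the determinant; AM--HM/Jensen for the trace of the inverse, where the paper instead chains GM--HM through $\det R\le 1$ --- an immaterial difference). Part (3) is where you genuinely diverge. You use Cauchy--Schwarz, $n^2=(\one'\one)^2\le(\one'R\one)(\one'R^{-1}\one)$, together with the entrywise bound $\one'R\one\le n^2$, so that $\one'R^{-1}\one=1$ would force $\one'R\one=n^2$, hence $R=\one\one'$, which is singular; this is clean, gives the sharper conclusion $\one'R^{-1}\one>1$ for invertible $R$ (with $n\ge2$, as the paper assumes), and needs no spectral decomposition. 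The paper instead expands $\one'R^{-1}\one=\sum_j\lambda_j^{-1}(\one'\bu_j)^2$, normalizes the weights, applies Jensen to $\lambda\mapsto\lambda^{-1}$, and bounds the largest eigenvalue via $\Tr R=n$ to reach a numerical contradiction --- i.e.\ exactly the eigen-expansion-plus-convexity argument you dismiss in your closing remark as insufficient "by itself"; with Jensen it does work, so that aside is slightly unfair, although your route has the incidental advantage of sidestepping a normalization slip in the paper's computation (it takes $\sum_j(\one'\bu_j)^2=n^2$ where the correct value is $\|\one\|^2=n$; with the right constant the paper's chain ends in $1\le\frac1n\max_j\lambda_j\le 1$ and one must additionally note that $\max_j\lambda_j<n$ because all eigenvalues are positive and sum to $n$). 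In short: same skeleton for (1)--(2), a different and somewhat tighter argument for (3), with the paper's version buying consistency with the spectral setup it introduces for the whole proposition and yours buying brevity and a strict inequality.
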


\begin{proof}
\begin{enumerate}
\item  $n = \Tr R = \sum_{j=1}^n \lambda_j$. From $\Det R = \prod_{j=1}^n \lambda_j$, as the arithmetic mean is larger than the the geometric mean, 
\begin{equation*}
  1 = \frac{\sum_{j=1}^n \lambda_j}n \ge \left(\prod_{j=1}^n \lambda_j\right)^{\frac1n} = (\Det R)^{\frac1n} \ ,
\end{equation*}
with equality if, and only if the $\lambda_j$'s are all equal, hence equal to 1, which happens if $R = I$. 
\item The geometric mean is larger or equal than the harmonic mean, hence
\begin{equation*}
1 \ge  (\Det R)^{\frac1n} = \left(\prod_{j=1}^n \lambda_j\right)^{\frac1n} \ge n \left(\sum_{j=1}^n \lambda^{-1}_j\right)^{-1} \ , 
\end{equation*}
with equality if, and only if, $\lambda_j = 1$, $j=1,\dots,n$. It follows $\frac1n \sum_{j=1}^n \lambda_j^{-1} \ge 1$.
\item  We derive a contradiction from $1 = \one' R^{-1} \one$. As $R^{-1} = \sum_{j=1}^n \lambda_j^{-1} \bu_j \bu_j'$ and $\sum_{j=1}^n (\one' \bu_j)^2 = \normof \one ^2 = n^2$,
  \begin{equation*}
    1 = \one' R^{-1} \one = \sum_{j=1}^n \lambda_j^{-1} (\one'\bu_j)^2 = n^2 \sum_{j=1}^n (\lambda_j)^{-1} \theta_j \ ,
  \end{equation*}
where $\theta_j = (\one'\bu)^2 / n^2 \ge 0$ and $\sum_{j=1}^n \theta_j = 1$. From the convexity of $\lambda \mapsto \lambda^{-1}$ we obtain
\begin{equation*}
1 = n^2 \sum_{j=1}^n (\lambda_j)^{-1} \theta_j \ge n^2 \left(\sum_{j=1}^n \lambda_j \theta_j\right)^{-1} \ , 
\end{equation*}
hence the contradiction
\begin{equation*}
  1 \le \frac1{n^2} \sum_{j=1}^n \lambda_j \theta_j \le \frac1{n^2} \max\setof{\lambda_j}{j=1,\dots,n}  \le \frac 1n \ .
\end{equation*}
\end{enumerate}
\end{proof}

\bigskip

From Proposition \ref{prop:3} we have immediately the following result of interest.

\begin{proposition} Assume the correlation matrix $\Sigma=\sigma^2 R \in \mathbb S_{=}$ is invertible. It follows that $\Gamma = \sigma^2(\one\one' - R)$ is invertible, with
  \begin{equation}\label{eq:SM1}
 \Gamma^{-1} = - \Sigma^{-1} - (\sigma^{-2} - \one' \Sigma^{-1} \one)^{-1} \Sigma^{-1}\one \one' \Sigma^{-1}
\end{equation}
and
\begin{equation}\label{eq:SM2}
\Sigma^{-1} = -\Gamma^{-1} - (\sigma^{-2} - \one' \Gamma^{-1} \one)^{-1} \Gamma^{-1}\one \one' \Gamma^{-1}
\end{equation}
\end{proposition}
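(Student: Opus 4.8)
The plan is to reduce everything to the Sherman-Morrison formula proved just above, applied to the rank-one perturbation $\one\one' - R$. Since $\Sigma = \sigma^2 R$ is invertible and $\sigma^2 > 0$, the correlation matrix $R$ is invertible, so Proposition~\ref{prop:3}\,(3) gives $\one'R^{-1}\one \ne 1$. By the biconditional in the Sherman-Morrison statement, this means $\one\one' - R$ is invertible, and therefore so is $\Gamma = \sigma^2(\one\one' - R)$. This settles the first assertion; note that the non-degeneracy condition needed to run the argument is supplied for free by Proposition~\ref{prop:3}.

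For \eqref{eq:SM1} I would write $\Gamma^{-1} = \sigma^{-2}(\one\one' - R)^{-1}$ and insert the Sherman-Morrison inversion formula with $A = R$, obtaining
\begin{equation*}
  \Gamma^{-1} = \sigma^{-2}\left(-R^{-1} - (1 - \one'R^{-1}\one)^{-1} R^{-1}\one\one'R^{-1}\right).
\end{equation*}
Then replace $R^{-1}$ by $\sigma^2\Sigma^{-1}$ everywhere (which holds because $\Sigma = \sigma^2 R$) and collect the scalar factors; the only real bookkeeping is the identity
\begin{equation*}
  \sigma^2\left(1 - \sigma^2\one'\Sigma^{-1}\one\right)^{-1} = \left(\sigma^{-2} - \one'\Sigma^{-1}\one\right)^{-1},
\end{equation*}
after which \eqref{eq:SM1} drops out.

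To get \eqref{eq:SM2} it is enough to observe that the relation $\Gamma + \Sigma = \sigma^2\one\one'$ between the two matrices is symmetric under exchanging $\Gamma$ and $\Sigma$; equivalently, $\Sigma = \sigma^2(\one\one' - \sigma^{-2}\Gamma)$. Since we have just shown that both $\Sigma$ and $\Gamma$ are invertible, the necessity direction of Sherman-Morrison applied to the invertible matrix $\Sigma$ already forces $\one'(\sigma^{-2}\Gamma)^{-1}\one \ne 1$, so the argument above applies verbatim with $R$ replaced by $\sigma^{-2}\Gamma$, and the same scalar bookkeeping yields \eqref{eq:SM2}. If one prefers an explicit verification of the non-degeneracy condition, one can compute $\one'\Gamma^{-1}\one$ from \eqref{eq:SM1}: writing $s = \one'\Sigma^{-1}\one$ one finds $\one'\Gamma^{-1}\one = \sigma^{-2}s/(s - \sigma^{-2})$, and $s \ne \sigma^{-2}$ because $\one'R^{-1}\one \ne 1$ by Proposition~\ref{prop:3}\,(3), so indeed $\one'\Gamma^{-1}\one \ne \sigma^{-2}$.

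There is no genuine obstacle here: the non-degeneracy conditions needed to invoke Sherman-Morrison come for free, in the forward direction from Proposition~\ref{prop:3}\,(3) and in the backward direction from the standing hypothesis that $\Sigma$ is invertible. The only point where care is required is tracking the powers of $\sigma^2$ through the substitution $R^{-1} = \sigma^2\Sigma^{-1}$, so that the scalar coefficient $(1 - \one'R^{-1}\one)^{-1}$ is correctly rewritten as $(\sigma^{-2} - \one'\Sigma^{-1}\one)^{-1}$.
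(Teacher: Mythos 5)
Your proposal is correct and follows essentially the same route as the paper: invoke Proposition~\ref{prop:3}\,(3) to guarantee the non-degeneracy condition $\one'R^{-1}\one \ne 1$ (equivalently $\sigma^{-2} - \one'\Sigma^{-1}\one \ne 0$, which the paper states with a typo, writing $\one'R\one$ for $\one'R^{-1}\one$), and then apply the Sherman--Morrison formula with the scalar bookkeeping you describe. Your treatment of \eqref{eq:SM2}, using the symmetry of $\Gamma + \Sigma = \sigma^2\one\one'$ and the biconditional in Sherman--Morrison to get $\one'\Gamma^{-1}\one \ne \sigma^{-2}$, just makes explicit a step the paper's ``hence the conclusion'' leaves implicit.
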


\begin{proof}
From the assumption it follows $\det R \ne 0$ so that
  \begin{equation*}
    \sigma^{-2} - \one' \Sigma^{-1} \one = \sigma^{-2}(1 - \one' R \one) \ne 0 \ ,
  \end{equation*}
hence the conclusion.
\end{proof}

We can now analyze the likelihood of the Gaussian model N$(\mu\one,\sigma^2R)$ in terms of the variogram.

First, we compute the determinant of the correlation matrix
\begin{align*}
    \detof{\sigma^2 R} &= \detof{\sigma^2 \one\one' - \Gamma} \\
&= \sigma^{2n} \detof{\one\one' - \sigma^{-2} \Gamma} \\ &= \sigma^{2n} \left[\detof{-\sigma^{-2}\Gamma} + \one' \adjof{-\sigma^{-2} \Gamma} \one \right] \\ &= \detof{-\Gamma} - \sigma^2 \one' \adjof{-\Gamma} \one \ .
\end{align*}

Second, we compute the quadratic form of the concentration matrix

  \begin{align*}
    \by' \Sigma^{-1} \by &= \by'\left(-\Gamma^{-1} - (\sigma^{-2} - \one' \Gamma^{-1} \one)^{-1} \Gamma^{-1}\one \one' \Gamma^{-1}\right) \by \\ &= - \by' \Gamma^{-1} \by - (\sigma^{-2} - \one' \Gamma^{-1} \one)^{-1} (\by' \Gamma^{-1} \one)^2. 
  \end{align*}

Third, we compute the log-likelihood with $\mu=0$:

  \begin{multline*}
    \log p\left(\by \middle| \sigma^2, \Gamma \right) = \\ 
  -\frac{n}2\logof{2\pi} 
  -\frac12 \logof{\detof{\one\one' - \sigma^{-2} \Gamma}} - \frac1{2\sigma^2} \by' (\one\one' - \sigma^{-2} \Gamma)^{-1} \by = \\ -\frac{n}2\logof{2\pi}
  -\frac12 \logof{\detof{-\Gamma} - \sigma^2 \one' \adjof{-\Gamma} \one} \\ + \frac12 \by' \Gamma^{-1} \by +\frac12 (\sigma^2 - \one' \Gamma^{-1} \one)^{-1} (\by' \Gamma^{-1} \one)^2 \ .
  \end{multline*}

Here are the essentials of the computations leading to a maximum likelihood estimation of $\Gamma$ are the following. In the direction of a generic symmetric matrix with zero diagonal $H$,

  \begin{equation*}
    d_H(\Gamma \mapsto \logof{\detof{\one\one' - \sigma^{-2}\Gamma}}) = \traceof{(\sigma^2 \one\one' - \Gamma)^{-1} H}
  \end{equation*}
and
  \begin{multline*}
    d_H(\Gamma \mapsto \by' (\one\one' - \sigma^{-2}\Gamma)^{-1}\by) \\ = \sigma^2 \traceof{(\sigma^2 \one\one' - \Gamma)^{-1} \by \by' (\sigma^2 \one\one' - \Gamma)^{-1} H}\ .
  \end{multline*}
so that the normal equations for $\Gamma$ reduce to the condition
  \begin{equation*}
   -(\sigma^2 \one\one' - \Gamma)^{-1} + (\sigma^2 \one\one' - \Gamma)^{-1} \by \by' (\sigma^2 \one\one' - \Gamma)^{-1} \quad \text{is diagonal} \ .  
 \end{equation*}

The approach with parameters $\sigma^2$, $\Gamma$ is feasible in principle, but it does not appear promising in term of ease of computation. We will see in Sec. \ref{sec:projection} below a different, possibly better, approach.

\section{Projecting on $\spanof \one ^\perp$}\label{sec:projection}

We now change our point of view to consider the same problem from a different angle suggested by the observation the variogram does not change if we change the general mean $\mu$. In fact, we can associate the variogram with the
state space description of the Gaussian vector.

The following proposition is a new characterization of the variogram matrix in our setting.

\begin{proposition}
\label{prop:split}
\begin{enumerate}
\item
The matrix $\Gamma$ is a variogram matrix of a covariance matrix $\Sigma \in \mathbb V_+$ if, and only if, the matrix 
  \begin{equation}\label{eq:NCcondition}
    \Sigma_0 = - \left(I-\frac1n \one\one'\right)' \Gamma \left(I-\frac1n \one\one'\right)
  \end{equation}
  is symmetric, positive definite and with constant diagonal.
\item If $Y_0 \sim \Normal_n(0,\Sigma_0)$, then its variogram is $\Gamma$ and $Y_0$ is supported by $\spanof{\one}^\perp$. 
\end{enumerate}
\end{proposition}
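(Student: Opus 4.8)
The plan is to reduce both parts to elementary identities for the orthogonal projector $P = I - \frac1n \one\one'$ onto $\spanof{\one}^\perp$, the complement of the projector $\frac1n\one\one'$ of \eqref{eq:negdef}: it is symmetric, idempotent, $P\one = 0$, and — the observation that makes the variogram reappear — it fixes each difference of standard basis vectors, $P(\sbasis_i - \sbasis_j) = \sbasis_i - \sbasis_j$, because $\one'(\sbasis_i - \sbasis_j) = 0$. In this notation $\Sigma_0 = -P\Gamma P$, which is visibly symmetric and satisfies $\Sigma_0 \one = -P\Gamma(P\one) = 0$. Since $\Sigma_0\one = 0$ in all cases, ``positive definite'' in the statement must be read as positive definite on $\spanof{\one}^\perp$, i.e. $\Sigma_0$ positive semidefinite with kernel exactly $\spanof{\one}$; at the start I would record this, together with the remark that, by condition \ref{item:negdef2} of Proposition \ref{prop:negdef}, conditional negative definiteness of $\Gamma$ already forces $\Sigma_0$ to be at least positive semidefinite, the strict version being the extra content.

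I would prove part (2) first, as part (1) reuses it. Let $Y_0 \sim \Normal_n(0,\Sigma_0)$. The $(i,j)$ entry of its variogram is $\frac12 \varof{Y_{0,i}-Y_{0,j}} = \frac12(\sbasis_i-\sbasis_j)'\Sigma_0(\sbasis_i-\sbasis_j) = -\frac12(\sbasis_i-\sbasis_j)'P\Gamma P(\sbasis_i-\sbasis_j)$; the fixed-point identity collapses the two $P$'s, leaving $-\frac12(\sbasis_i-\sbasis_j)'\Gamma(\sbasis_i-\sbasis_j) = -\frac12(\gamma_{ii}+\gamma_{jj}-2\gamma_{ij}) = \gamma_{ij}$, where I used that $\Gamma$ is symmetric with zero diagonal. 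Hence the variogram of $Y_0$ equals $\Gamma$. For the support claim, $\varof{\one'Y_0} = \one'\Sigma_0\one = 0$, so $\one'Y_0 = 0$ almost surely and $Y_0$ takes its values in $\spanof{\one}^\perp$.

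For part (1), the direction ``$\Leftarrow$'' is then immediate: if $\Sigma_0$ is symmetric, positive definite on $\spanof{\one}^\perp$, and has constant diagonal, it is a genuine covariance matrix — positive semidefinite, of rank $n-1$, with stationary variance — and $\Sigma_0\one = 0$ shows the associated Gaussian vector is concentrated on $\spanof{\one}^\perp$, so $\Sigma_0 \in \mathbb V_+$; by part (2) its variogram matrix is $\Gamma$, hence $\Gamma$ is the variogram of a member of $\mathbb V_+$. For ``$\Rightarrow$'', suppose $\Gamma$ is the variogram of some $\Sigma \in \mathbb V_+$. Concentration on $\spanof{\one}^\perp$ gives $\one'\Sigma\one = 0$, hence $\Sigma\one = 0$ and $P\Sigma P = \Sigma$; denoting by $\sigma_0^2$ the common diagonal value, the variogram identity of Section \ref{sec:UK} reads $\Gamma = \sigma_0^2\one\one' - \Sigma$, and therefore $\Sigma_0 = -P\Gamma P = -P(\sigma_0^2\one\one' - \Sigma)P = P\Sigma P = \Sigma$. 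Thus $\Sigma_0$ coincides with $\Sigma$ and inherits all the required properties.

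I do not expect a serious obstacle here; the one thing to keep straight — pure bookkeeping — is that $\Sigma_0$ depends on $\Gamma$ only through $P\Gamma P$, so adding any $\one\bw' + \bw\one'$ to $\Gamma$ leaves $\Sigma_0$ unchanged, and one cannot hope to recover a general stationary covariance from $\Sigma_0$ alone. This is exactly why the ``$\Rightarrow$'' direction must exploit the hypothesis $\Sigma \in \mathbb V_+$ (stationary variance and support on $\spanof{\one}^\perp$) to pin down $\Sigma = \Sigma_0$; equivalently, one may note that a variogram matrix arises from some $\Sigma \in \mathbb V_+$ if and only if $\Gamma\one \in \spanof{\one}$, which is precisely the condition making the diagonal of $\Sigma_0 = -P\Gamma P$ constant.
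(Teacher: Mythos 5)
Your part (2) and the ``$\Leftarrow$'' half of part (1) are correct and essentially the paper's own argument: like the paper, you collapse the projector on the differences $\sbasis_i-\sbasis_j$ to get $(\sbasis_i-\sbasis_j)'\Sigma_0(\sbasis_i-\sbasis_j)=2\gamma_{ij}$, you take $\Sigma_0$ itself as the covariance whose variogram is $\Gamma$, and your reading of ``positive definite'' as positive definite on $\spanof{\one}^\perp$ is the only tenable one, since $\Sigma_0\one=0$ always (a point the paper glosses over).

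The genuine divergence is in the ``$\Rightarrow$'' direction, and it traces back to the undefined symbol $\mathbb V_+$. You read it as the constant-diagonal covariances supported on $\spanof{\one}^\perp$, so that $P\Sigma P=\Sigma$ (with $P=I-\frac1n\one\one'$) and hence $\Sigma_0=\Sigma$; under that reading your argument is complete. The paper instead takes $\Sigma=\sigma^2R$ with $R$ a general correlation matrix and tries to verify directly that $\Sigma_0=\sigma^2 PRP$ has constant diagonal; but its computation silently drops an $i$-dependent term: correctly, $(\Sigma_0)_{ii}=\sigma^2\bigl(1-\frac2n(R\one)_i+\frac1{n^2}\one'R\one\bigr)$, which is constant in $i$ only when the row sums $(R\one)_i$ are all equal, i.e. exactly when $\Gamma\one\in\spanof{\one}$ --- the condition you isolate in your closing remark. (The paper's displayed value $\sigma^2(\frac1{n^2}\one'R\one-1)$ is even nonpositive, which a diagonal entry of a covariance cannot be.) So under the paper's broader reading the ``constant diagonal'' half of the forward implication fails in general, while under your restricted reading the equivalence does hold. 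In short, you have not proved the statement the paper's proof aims at, but that statement is not correct as the paper argues it; your version is the repaired one, and it is the reading consistent with the subsequent proposition of Sec. \ref{sec:projection}, where the projected vector $\widehat Y$ has covariance $-P\Gamma P$ with no constant-diagonal claim attached.
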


\begin{proof}
  \begin{enumerate}
\item
If $\Gamma = \sigma^2(\one\one'-R)$ is the variogram matrix of $\Sigma=\sigma^2R$, then from Eq. \eqref{eq:NCcondition} we have
\begin{equation*}
  \Sigma_0 = \sigma^2 \left(I-\frac1n \one\one'\right)' R \left(I-\frac1n \one\one'\right),
\end{equation*}
which is indeed positive definite. Let us show that the diagonal elements of $\Sigma_0$ are constant.
\begin{align*}
  (\Sigma_0)_{ii} &= \sigma^2 \sbasis_i'\left(I-\frac1n \one\one'\right)' R \left(I-\frac1n \one\one'\right) \sbasis_i \\
&= \sigma^2 \left(\sbasis_i-\frac1n\one\right)' R \left(\sbasis_i-\frac1n\one\right) \\
&= \sigma^2 \left(\sbasis_i' R \sbasis_i - \frac 2n\sbasis_i R \one + \frac1{n^2} \one' R \one \right) \\
&= \sigma^2\left(\frac1{n^2} \one' R \one - 1\right)
\end{align*}

Viceversa, assume $\Sigma_0$ is a covariance matrix. As $\sbasis_i - \sbasis_j \in \spanof{\one}^\perp$, the variogram of $\Sigma_0$ has elements
\begin{multline*}
 (\sbasis_i - \sbasis_j)' \Sigma_0 (\sbasis_i - \sbasis_j) = \\ (\sbasis_i - \sbasis_j)' \left(I-\frac1n \one\one'\right)' (-\Gamma) \left(I-\frac1n \one\one'\right) (\sbasis_i - \sbasis_j) = \\ - (\sbasis_i - \sbasis_j)' \Gamma (\sbasis_i - \sbasis_j) = - \gamma_{ii} - \gamma_{jj} + 2 \gamma_{ij} = 2 \gamma_{ij}.
\end{multline*}
\item
  As $\one' (\sbasis_i - \sbasis_j)  = 0$, then $\one' \left(I-\frac1n \one\one'\right)' (-\Gamma) \left(I-\frac1n \one\one'\right) \one = 0$, hence the distribution of $Y_0$ is supported by the space $\spanof{\one}^\perp$.
\end{enumerate}
\end{proof}

It is possible to split every Gaussian $Y$ with covariance matrix $\Sigma \in \mathbb S_=$ according the splitting $\reals^n = \spanof\one \oplus \spanof\one^\perp$. The corresponding projections split the Gaussian process $Y$ into two components, one with the covariance as in Proposition \ref{prop:split}, the other proportional to the empirical mean. Note that the two components have a singular covariance matrix.

\begin{proposition}
Let $Y \sim \Normal_n(\bmu,\Sigma)$, $\Sigma = \sigma^2 R \in \mathbb S_+$ with variogram $\Gamma = \sigma^2(\one\one' - R)$. Let $\widehat Y = \left(I-\frac1n \one\one'\right)Y \sim \Normal_n\left(\bold 0, \Sigma_0\right)$ be the projection of $Y$ onto $\spanof{\one}^\perp$ so that we can write $Y = \widehat Y + \overline Y$, where each component of $\overline Y$ is the empirical mean $\frac1n \one' Y$. 
\begin{enumerate}
\item The distribution of $\widehat Y$ depends on the variogram only,
  \begin{equation*}
    \widehat Y \sim \Normal_n\left(\bold 0,- \left(I-\frac1n \one\one'\right)' \Gamma \left(I-\frac1n \one\one'\right)\right) \ ,
  \end{equation*}
  and the variogram matrix of $\widehat Y$ is $\Gamma$.
\item The distribution of $\frac1n \one' Y$, conditionally to $\widehat Y$ is Gaussian with mean $\mu$.
\end{enumerate}
\end{proposition}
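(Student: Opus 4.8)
The plan is to work throughout with the two linear images of $Y$ occurring in the statement: the vector $\widehat Y = QY$, where $Q = I - \tfrac1n\one\one'$ is the orthogonal projector onto $\spanof{\one}^\perp$ (so $Q\one = 0$ and $Q' = Q$), and the scalar $m = \tfrac1n\one'Y$, for which $\overline Y = m\one$ and $Y = \widehat Y + m\one$. Since both are linear functions of the Gaussian vector $Y$, the pair $(\widehat Y, m)$ is jointly Gaussian and $Y \leftrightarrow (\widehat Y, m)$ is a linear isomorphism of $\reals^n$ onto $\spanof{\one}^\perp \times \reals$.

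For item (1) I would first compute the mean of $\widehat Y$, namely $Q\bmu = \mu\, Q\one = 0$, and then its covariance matrix $Q\Sigma Q' = Q\Sigma Q$. Substituting $\Sigma = \sigma^2\one\one' - \Gamma$ from \eqref{eq:transform} and using $Q\one = 0$ to annihilate the rank-one term, this covariance equals $-Q\Gamma Q$, which is exactly the matrix $\Sigma_0$ of \eqref{eq:NCcondition} appearing in the statement. Proposition \ref{prop:split}(1) already guarantees that $\Sigma_0$ is a genuine covariance matrix with constant diagonal, and the identity $(\sbasis_i-\sbasis_j)'\Sigma_0(\sbasis_i-\sbasis_j) = 2\gamma_{ij}$ proved there — which applies because each $\sbasis_i - \sbasis_j$ lies in $\spanof{\one}^\perp$ and is fixed by $Q$ — shows that half the variance of $\widehat Y_i - \widehat Y_j$ equals $\gamma_{ij}$, i.e. the variogram of $\widehat Y$ is $\Gamma$. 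Thus item (1) is bookkeeping plus a citation of Proposition \ref{prop:split}, and presents no real obstacle.

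For item (2), joint Gaussianity of $(\widehat Y, m)$ makes the conditional law of $m$ given $\widehat Y$ automatically Gaussian, and linearity gives the unconditional mean $\tfrac1n\one'\bmu = \mu$ immediately; the only thing left to compute is the conditional mean. I would obtain it by rewriting the Gaussian log-density of $Y$ in the coordinates $(\widehat Y, m)$: with $Y - \mu\one = u + (m-\mu)\one$ and $u = \widehat Y \in \spanof{\one}^\perp$, the exponent $-\tfrac12(u + (m-\mu)\one)'\Sigma^{-1}(u + (m-\mu)\one)$ is a quadratic in $m$ with leading coefficient $\one'\Sigma^{-1}\one > 0$; completing the square shows that, conditionally on $\widehat Y$, $m$ is Gaussian with variance $(\one'\Sigma^{-1}\one)^{-1}$ and mean $\mu - (\one'\Sigma^{-1}\one)^{-1}\one'\Sigma^{-1}\widehat Y$.

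The main obstacle is exactly this last regression term $-(\one'\Sigma^{-1}\one)^{-1}\one'\Sigma^{-1}\widehat Y$. It vanishes identically — making the conditional mean literally $\mu$ and, moreover, $\widehat Y$ and $m$ independent — precisely when $\Sigma^{-1}\one$ is proportional to $\one$, equivalently when $\one$ is an eigenvector of $R$, equivalently (since $\Gamma\one = \sigma^2(n\one - R\one)$) when $\Gamma\one$ is proportional to $\one$. For $n = 2$ every correlation matrix has equal row sums, so this holds automatically and the assertion is exactly as stated; for $n \ge 3$ and a general $R$ it need not hold. Hence, in the genuinely non-parametric setting, I would read the conclusion of item (2) as: the conditional law of $\tfrac1n\one'Y$ given $\widehat Y$ is Gaussian, the parameter $\mu$ enters it only as a location shift (its coefficient in the conditional mean being $1$), the conditional variance does not involve $\mu$, and $\tfrac1n\one'Y$ has unconditional mean $\mu$; the conditional mean is literally $\mu$, and $\overline Y$ is independent of $\widehat Y$, exactly under the balancedness condition $R\one \in \spanof{\one}$. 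Settling this regression coefficient, and recognising when it degenerates, is the technical heart of item (2).
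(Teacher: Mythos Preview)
Your treatment of item (1) is correct and coincides with the paper's own proof, which consists of the single displayed identity
\begin{equation*}
  -\left(I-\tfrac1n\one\one'\right)'\Gamma\left(I-\tfrac1n\one\one'\right)
  = \left(I-\tfrac1n\one\one'\right)'\Sigma\left(I-\tfrac1n\one\one'\right),
\end{equation*}
i.e.\ exactly your computation $Q\Sigma Q = -Q\Gamma Q$ from $\Sigma = \sigma^2\one\one' - \Gamma$ and $Q\one = 0$. The paper offers nothing beyond this line; in particular the claim that the variogram of $\widehat Y$ is again $\Gamma$ is left implicit, and your appeal to Proposition~\ref{prop:split} is the natural way to close it.

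For item (2) you have done substantially more than the paper, whose proof does not address this item at all. Your completion-of-the-square argument is correct: conditionally on $\widehat Y$ the empirical mean $m=\tfrac1n\one'Y$ is Gaussian with variance $(\one'\Sigma^{-1}\one)^{-1}$ and mean $\mu-(\one'\Sigma^{-1}\one)^{-1}\one'\Sigma^{-1}\widehat Y$. Your diagnosis that the regression term vanishes identically precisely when $\Sigma^{-1}\one\in\spanof{\one}$, equivalently when $\one$ is an eigenvector of $R$, is accurate and is \emph{not} a hypothesis of the proposition. So the literal assertion ``conditional mean equal to $\mu$'' does not follow from the stated assumptions for $n\ge 3$; what one can say unconditionally is that the conditional law is Gaussian, that $\mu$ enters only as an additive shift, and that the marginal mean of $m$ is $\mu$. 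Your reading and your caveat are the right ones; the paper simply leaves this point unexamined.
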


\begin{proof}
    \begin{equation*}
      - \left(I-\frac1n \one\one'\right)' \Gamma \left(I-\frac1n \one\one'\right) = \left(I-\frac1n \one\one'\right)' \Sigma \left(I-\frac1n \one\one'\right) \ .
    \end{equation*}
  \end{proof}
  
This suggests the following empirical estimation algorithm.

\begin{enumerate}
\item Project the independent sample data $\by_1,\dots,\by_N$ onto $\spanof\one^{\perp}$ by subtracting the empirical mean $\by_+ = \frac1n \sum \by_i$, to get $\widehat \by_1 = \by_1 - \by_+,\dots,\widehat \by_N = \by_N - \by_+$. Use the empirical estimator of the variogram matrix on the projected data.
\item Estimate $\mu$ with the empirical mean.
\end{enumerate}

In the same spirit, we could suggest the simulation of a random variable with variogram matrix $\Gamma$ by the generation of $\Normal_n(\bold 0, \Sigma_0)$ data in $\spanof\one^\perp$.

Both suggestions will be further discussed in future work.

\section{Elliptope}\label{sec:elliptope}

We now turn to the geometrical description of the set of variograms. From the basic equation $\Gamma=\sigma^2(\one\one' - R)$ it follows that the set of variogram matrices is an affine image of the set of correlation matrices in the space of symmetric matrices. The set of correlation matrices is a convex bounded set whose geometrical shape has been studied in a number of papers, i.e. \cite{rousseeuw|molenberghs:1994}, \cite{rousseeuw|molenberghs:1994}, \cite{rapisarda|brigo|mercurio:2007}.   Such a shape is of central interest in a non parametric Bayesian approach to the statistics of the universal Kriging model. It appears also in convex optimization, where it has been called \emph{elliptope}.

Let us discuss the case $n=3$.
\begin{figure}[t]
\centering
\begin{tabular}{cc}
\begin{minipage}{160pt}
\includegraphics[height=155pt]{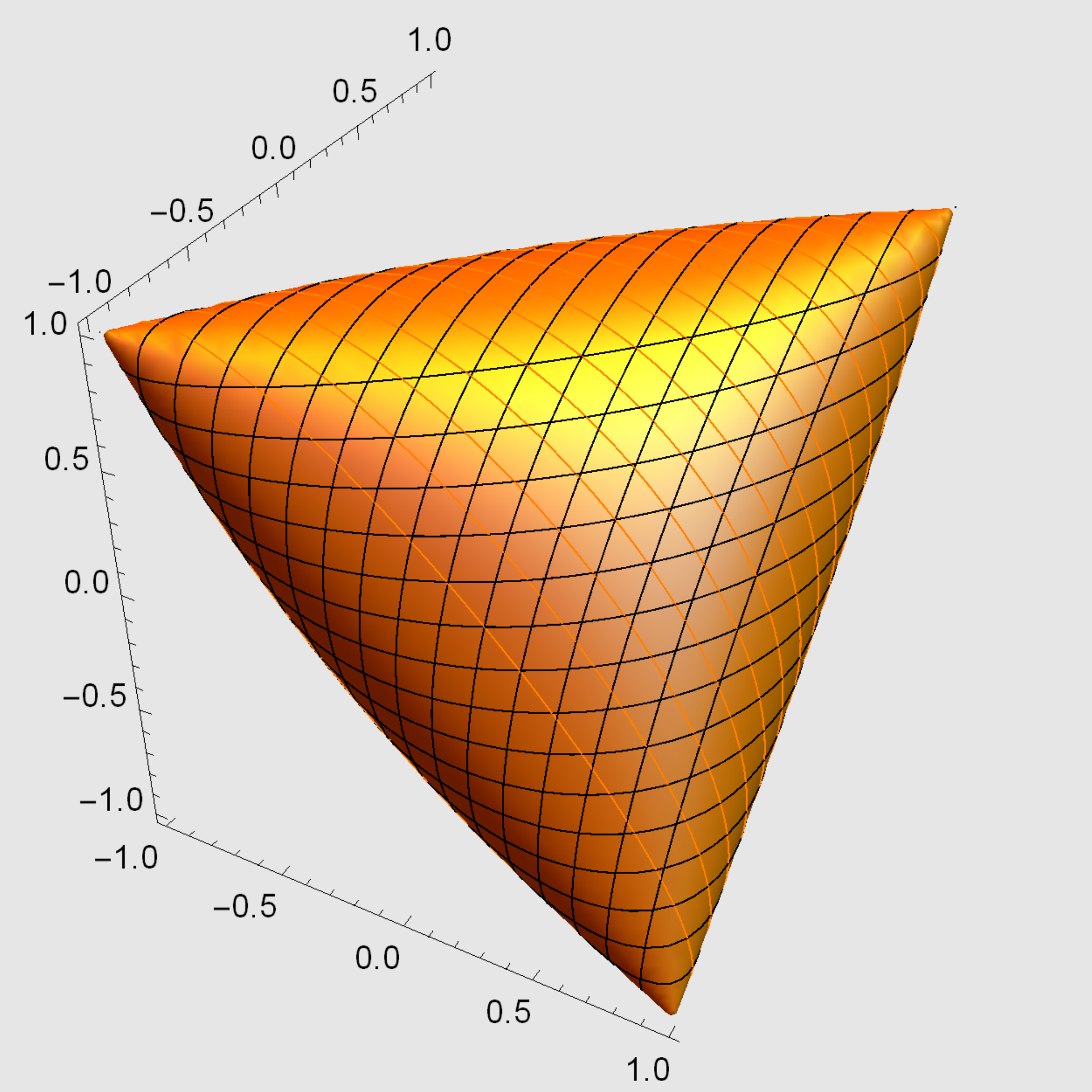}
\caption{The 3-elliptope}
\label{fig:3-elliptope}
\end{minipage}
&
\centering
\begin{minipage}{160pt}
\includegraphics[height=155pt]{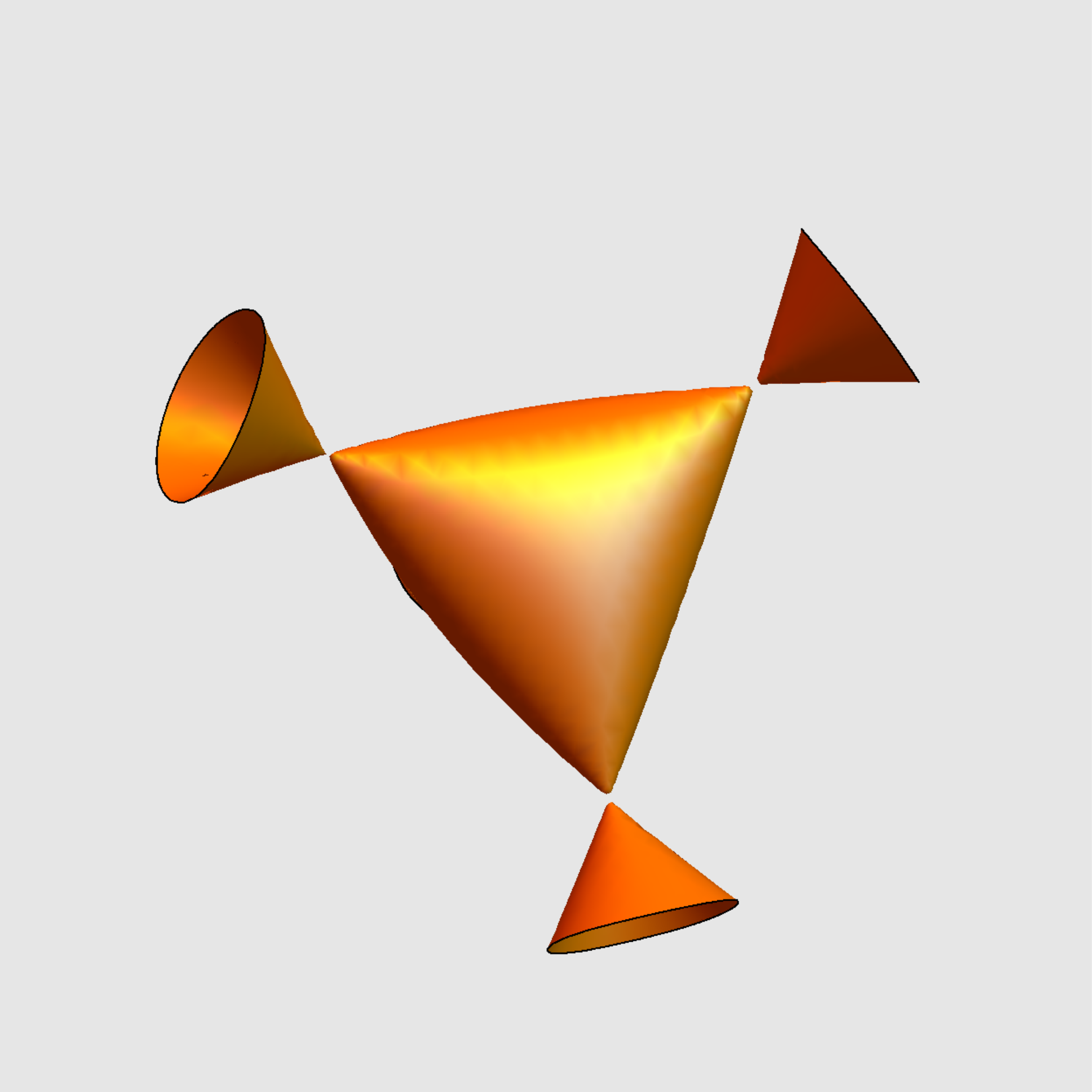}
\caption{Algebraic variety}
\label{fig:3-variety}
\end{minipage}
\end{tabular}
\end{figure}
All principal minors of $R$ are nonnegative,
  \begin{equation*}
    \detof R = \detof{\begin{bmatrix} 1 & x & y \\ x & 1 & z \\ y & z & 1
  \end{bmatrix}} = 1 - x^2 - y^2 - z^2 + 2 xyz \ge 0
  \end{equation*}
and $1-x^2,1-y^2,1-z^2 \ge 0$. The three last inequalities define the cube $Q = [-1,+1]^3$ while the equation $1 - x^2 - y^2 - z^2 + 2 xyz = 0$ is a cubic algebraic variety whose intersection with the cube $Q$ is the border of the elliptope. All horizontal section $z = c$, $-1 \le c \le 1$, of the elliptope are the interior of the ellipses
  \begin{equation*}
    1 - x^2 - y^2 + 2cxy \ge c^2
  \end{equation*}
Same for other sections. See Fig.s \ref{fig:3-elliptope} and \ref{fig:3-variety}.

Various proposals of apriory distribution on the elliptope exist, see for example \cite{barnard|mcculloch|meng:2000}. 

Going on with the discussion of our example, the volume is easily computed and the uniform apriori is defined. Simulation is feasible for example by rejection method. An other option is to write $R = A'A$ where the columns of A are unit vectors. This gives an other possible apriori starting from independent unit vectors. Simulation is feasible, for example starting with independent standard Gaussians.

An interesting option is the Cholesky representation. A symmetric matrix $A$ is positive definite if there exists an upper triangular matrix
  \begin{equation*}
    T =
    \begin{bmatrix}
      \bt_1' \\ \bt_2' \\ \bt_3'
    \end{bmatrix}
  =
  \begin{bmatrix}
    t_{11} & t_{12} & t_{13} \\ 0 & t_{22} & t_{23} \\ 0 & 0 & t_{33}
  \end{bmatrix}, \quad t_{ii} \ge 0
  \end{equation*}
such that
  \begin{equation*}
    A = T'T = \left[ \bt_i \cdot \bt_j\right]_{ij} =
    \begin{bmatrix}
      t_{11}^2 & t_{11}t_{12} & t_{11}t_{13} \\ t_{11}t_{12} & t_{12}^2 + t_{22}^2 & t_{12}t_{13}+t_{22}t_{23} \\ t_{11}t_{13} & t_{12}t_{13}+t_{22}t_{23} & t_{13}^2 + t_{23}^2 + t_{33}^2
    \end{bmatrix}
  \end{equation*}

Moreover, $t_{11}t_{22}t_{33} \ne 0$ $\Leftrightarrow$ $T$ is unique and invertible if, and only if, $A$ is invertible. It is an identifiable parameterization for non singular matrices.

In the case of a the correlation matrix $R = T'T$ with
  \begin{equation*}
    T =
    \begin{bmatrix}
  \bt_1' \\ \bt_2' \\ \bt_3'
    \end{bmatrix}
  =
  \begin{bmatrix}
    \sqrt{1-t_{12}^2-t_{13}^2} & t_{12} & t_{13} \\ 0 & \sqrt{1-t_{23}^2} & t_{23} \\ 0 & 0 & 1
  \end{bmatrix}, \quad \bt_i \in 0^{i-1} \times S_{n-1+1}^+
  \end{equation*}

It follows
  \begin{equation*}
    R =
    \begin{bmatrix}
  1 & \sqrt{1-t_{12}^2-t_{13}^2}t_{12} & \sqrt{1-t_{12}^2-t_{13}^2} t_{13} \\ \sqrt{1-t_{12}^2-t_{13}^2}t_{12} & 1 & t_{12}t_{13}+\sqrt{1-t_{23}^2}t_{23} \\ \sqrt{1-t_{12}^2-t_{13}^2}t_{13} & t_{12}t_{13}+\sqrt{1-t_{23}^2}t_{23} & 1
    \end{bmatrix}
  \end{equation*}
and
  \begin{equation*}
    \detof{R} = (1-t_{12}^2-t_{13}^2)(1-t_{23}^2)
  \end{equation*}

  \section{Conclusion}
  In the last decades Kriging models have been recommended not only for the original application, but spatial noisy data in general. Thanks to the availability of comprehensive computing facilities and the recent progresses in software development, the numerical simulation of technologically complex systems has become an attractive alternative option to the physical experimentation. The most popular meta-model used when dealing with Computer Experiments (CE) is the Kriging model. The accuracy of this model strongly depends on the detection of the correlation structure of the responses. In the Bayesian approach, where the posterior distribution of a prediction Krige's $Y_0$ given the training set $(Y_1,\dots,Y_n)$ requires less uncertainty as possible on the correlation function, the use of the variogram as a parameter should be preferred because it does not demand a parametric approach as the correlation estimation does. The authors proved in a previous paper \cite{pistone|vicario:2016sis2014} the equivalence between variogram and spatial correlation function for stationary and intrinsically stationary processes. Here the study has been devoted to the characterization of matrices which are admissible variograms in the case of 1-stationarity. We expect these findings will allow for the imputation of an apriori distribution on the set of variogram matrices, as Bayesians do for the correlation in the Kriging modernization.

\section*{Acknowledgments}
An early version of this paper was presented at the 4th Stochastic Modeling Techniques and Data Analysis International Conference, June 1--4, 2016,
University of Malta, Valletta, Malta, with the title \emph{Bayes and Krige: Generalities}. The Authors thank both Guillaume Kon Kam King (CCA, Moncalieri) and Luigi Malag\`o (CCA, Moncalieri) for suggesting relevant references. We thank Emilio Musso (DISMA, Politecnico di Torino) for help with the elliptopes picture. G. Pistone is supported by de Castro Statistics, Collegio Carlo Alberto, Montalieri, and he is a member of GNAFA-INDAM.

\def\cprime{$'$}

\end{document}